\newtheoremstyle{theorem}
  {10pt}		  
  {10pt}  
  {\sl}  
  {\parindent}     
  {\bf}  
  {. }    
  { }    
  {}     
\theoremstyle{theorem}
\newtheorem{theorem}{Theorem}
\newtheoremstyle{defi}
  {10pt}		  
  {10pt}  
  {\rm}  
  {\parindent}     
  {\bf}  
  {. }    
  { }    
  {}     
\theoremstyle{defi}
\newtheorem{example}{Example}
\begin{document}

\author[1]{B. Altunkaya\thanks{bulent.altunkaya@ahievran.edu.tr}}
\author[2]{L. Kula\thanks{lkula@ahievran.edu.tr}}
\affil[1]{Department of Mathematics, Faculty of  Education, University of Ahi Evran, K{\i}r\c{s}ehir, Turkey}
\affil[2]{Department of Mathematics, Faculty of  Science, University of Ahi Evran, K{\i}r\c{s}ehir, Turkey}

\title{Some characterizations of slant and spherical helices due to sabban frame}
\date{}
\maketitle

\begin{abstract}

In this paper, we are investigating that under which conditions of the geodesic curvature of unit speed curve $\gamma$ that lies on the unit sphere, the curve $c$ which is obtained by using $\gamma$,  is a spherical helix or slant helix. 

\end{abstract}

{\bf Key Words:}Helices, Spherical Helices, Slant Helices, Sabban Frame.

\section{Introduction}
\label{sec:intro}

Izumiya and Takeuchi [4],[2] have defined helices, spherical helices, slant helices and conical geodesic curve and given a classification of special developable surfaces under the condition of the existence of such  a special curve as a geodesic [1].

Encheva and Georgiev [3] have used a similar method and determined a $Frenet$ $curve$ up to a direct similarity of $R^3$.

In this paper we have used the the method in [2], [3], and [4] to construct spherical helices and slant helices.

In section 2, we recall some basic concepts of differential geometry of space curves that we will use later. In the next section we will provide some new theorems and proofs about the construction of  spherical helices and slant helices.

\section{Basic Concepts}
\label{sec:meth} 

We now recall some basic concepts on classical differential geometry of curves in the Euclidean space  $ E^3$. For a regular curve $ c :I\subset R\rightarrow { R }^{ 3 }$  with curvature and torsion, $ \kappa$  and $ \tau $, the following Frenet-Serret formulae are given in [5] written in the matrix form for  $ \kappa>0$
\begin{equation*}
\left[ \begin{matrix} { T }^{ ' } \\ { N }^{ ' } \\ { B }^{ ' } \end{matrix} \right] =\left[ \begin{matrix} 0 & \kappa \nu  & 0 \\ -\kappa \nu  & 0 & \tau \nu  \\ 0 & -\tau \nu  & 0 \end{matrix} \right] \left[ \begin{matrix} T \\ N \\ B \end{matrix} \right] 
\end{equation*}
where 
\begin{equation}
\nu =\nu (t)=\left\| { c }^{ ' }(t) \right\|,
\kappa =\kappa (t)=\frac { \left\| { c }^{ ' }(t)\times { c }^{ '' }(t) \right\|  }{ { \left\| { c }^{ ' }(t) \right\|  }^{ 3 } }, 
\tau =\tau (t)=\frac { \left< { c }^{ ' }(t)\times { c }^{ '' }(t),{ c }^{ ''' }(t) \right>  }{ { \left\| { c }^{ ' }(t)\times { c }^{ '' }(t) \right\|  }^{ 2 } } 
\end{equation}
and 

\begin{equation}
T=T(t)=\frac { { c }^{ ' }(t) }{ \left\| { c }^{ ' }(t) \right\|  } ,B=B(t)=\frac { { c }^{ ' }(t)\times { c }^{ '' }(t) }{ \left\| { c }^{ ' }(t)\times { c }^{ '' }(t) \right\|  } ,N=N(t)=B(t)\times T(t)
\end{equation}
In the formulae above, we denote unit tangent vector with $T$, binormal unit vector with $B$,  unit principal normal vector with $N$, cross product with $\times$, and inner product with $<,>$.

A regular curve $c$ with $ \kappa>0$ is a cylindrical helix if and only if the ratio
\begin{equation*}
 \frac { \tau  }{ \kappa  } 
\end{equation*}
 is constant [5]. 

A regular curve $c$ with $ \kappa>0$ is a slant helix if and only if the geodesic curvature of the spherical image of the principal normal indicatrix of  $c$ 
\begin{equation}
\sigma(t)=\left( \frac { { \kappa  }^{ 2 } }{ { { \nu \left( { \kappa  }^{ 2 }+{ \tau  }^{ 2 } \right)  } }^{ { 3 }/{ 2 } } } { \left( \frac { \tau  }{ \kappa  }  \right)  }^{ ' } \right) \left( t \right) 
\end{equation}
is constant [4].

A regular curve $c$ with $ \kappa>0$ lies on the surface  of a sphere which has a radius  $r$ if and only if 
\begin{equation}
 { r }^{ 2 }=\left( \frac { 1 }{ { \kappa  }^{ 2 } } +{ \left( \frac { 1 }{ \nu \tau  } { \left( \frac { 1 }{ \kappa  }  \right)  }^{ ' } \right)  }^{ 2 } \right) \left( t \right) 
\end{equation}
satisfies [6]. We can easily simplify this equation as follows
\begin{equation}
\left( \frac { 1 }{ \nu  } \left[ \frac { 1 }{ \nu \tau  } { \left( \frac { 1 }{ \kappa  }  \right)  }^{ ' } \right] ^{ ' }+\frac { \tau  }{ \kappa  }  \right) \left( t \right) =0.
\end{equation}

Let $\gamma :I\rightarrow { S }^{ 2 }$ be a unit speed spherical curve with an arc length parameter $s$ and denote ${ \gamma  }^{ ' }\left( s \right) =t\left( s \right)   $ where ${ \gamma  }^{ ' }\left( s \right)=\frac { d\gamma  }{ ds }  $. If we set a vector $p\left( s \right) =\gamma \left( s \right) \times t\left( s \right)$, by definition we have an orthonormal frame $\left\{ \gamma \left( s \right), t\left( s \right) , p\left( s \right)  \right\} $ along $\gamma$. This frame is called the $Sabban$  $frame$ of $\gamma$. Then we have the following $Frenet-Serret$ formulae of  $\gamma$ 
\begin{equation}
\begin{matrix} { \gamma  }^{ ' }\left( s \right) =t\left( s \right)  \\ { t }^{ ' }\left( s \right) =-\gamma \left( s \right) +{ k }_{ g }\left( s \right) p\left( s \right)  \\ { p }^{ ' }\left( s \right) =-{ k }_{ g }\left( s \right) t\left( s \right)  \end{matrix}
\end{equation}
where ${ k }_{ g }\left( s \right)$ is the $geodesic$  $curvature$ of the curve  $\gamma$ on $S^2$  which is ${ k }_{ g }\left( s \right) =det\left( \gamma \left( s \right) ,t\left( s \right) , { t }^{ ' } \left( s \right)  \right) $ [2].

In [2] Izuyama and Takeuchi showed a way to construct all $Bertrand$ $curves$ by the following formula
\begin{equation}
c\left( s \right) =b\int _{ { s }_{ 0 } }^{ s }{ \gamma \left( \varphi  \right) d } \varphi +b\cot { \theta  } \int _{ { s }_{ 0 } }^{ s }{ p\left( \varphi  \right) d } \varphi +a
\end{equation}
where $b,\theta$ are constant numbers, a is a constant vector, and $\gamma$ is a unit speed curve on $S^2$ with the $Sabban$ $frame$ above. Also they showed that the spherical curve $\gamma$ is a circle if and only if the corresponding $Bertrand$ $curves$ are  circular helices.

In [3] Encheva and  Georgiev showed a way to construct all $Frenet$ $curves\left( \kappa >0 \right) $ by the following formula
\begin{equation}
c\left( s \right) =b\int { { e }^{ \int { k\left( s \right) ds }  }\gamma \left( s \right)  } ds+a
\end{equation}
where $b$ is a constant number, a is a constant vector, $\gamma$ is a unit speed curve on $S^2$ with the $Sabban$ $frame$ above, and $k:I\rightarrow R\quad$is a function of class $C^1$. Also they showed that the spherical curve $\gamma$ is a circle if and only if the corresponding $Frenet$  $curves$ are cylindrical helices.

If we use the equations in (1), (2), (6), (7), and (8) we can easily see the $Frenet$ $frame$ $\left\{ T, N, B \right\}$ of the curve $c$ and the $Sabban$ $frame$ $\left\{ \gamma, t, p \right\}$ of the curve $\gamma$  coincides. Therefore we can say the $tangent$  $indicatrix$ of the curve $c$ is $\gamma$.

\section{Special Results}

Now, we can deduce some results from the equations above. First, we want to show, under which circumstances the equation (8) is a spherical helix. As we know before, if $\gamma$ is a circle, the geodesic curvature of it is constant. Therefore we can write the theorem below.

\begin{theorem}
If the curve $\gamma$ is a circle, the curve c defined by (8) is a spherical helix if and only if  the function $k\left( s \right) =-{ k }_{ g }tan\left[ \left( { k }_{ g } \right) \left( s-{ b}_{ 1 } \right)  \right]$ where $b_{ 1 }\in R$ .
\end{theorem}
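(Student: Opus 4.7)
The plan is to compute the Frenet--Serret data of $c$ directly from formula (8), translate the spherical-curve condition (5) into an ODE for the speed $\nu$, and then read off $k$ from its solution.

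First I would set $\nu(s) = b\,e^{\int k(s)\,ds}$, so that $c'(s) = \nu(s)\gamma(s)$ and $\|c'\|=\nu$. Since $\gamma' = t$, one computes $c''=\nu'\gamma+\nu t$ and hence $c'\times c''=\nu^{2}p$, giving $\kappa = 1/\nu$. Differentiating once more and using the Sabban relation $t'=-\gamma+k_{g}p$ yields $\langle c'\times c'',c'''\rangle=\nu^{3}k_{g}$, so that $\tau=k_{g}/\nu$ and $\tau/\kappa=k_{g}$. Because $\gamma$ is a circle, $k_{g}$ is constant; thus $\tau/\kappa$ is constant and $c$ is automatically a cylindrical helix, so the spherical-helix property reduces to the single requirement that $c$ lie on a sphere.

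Plugging $1/\kappa = \nu$, $1/(\nu\tau)=1/k_{g}$ and $\tau/\kappa=k_{g}$ into (5) collapses that identity to the linear ODE
\begin{equation*}
\nu'' + k_{g}^{\,2}\,\nu = 0,
\end{equation*}
whose general solution is $\nu(s)=C\cos[k_{g}(s-b_{1})]$ for constants $C, b_{1}\in\mathbb{R}$. Since $\nu=b\,e^{\int k\,ds}$ forces $k=\nu'/\nu$, differentiating and dividing yields exactly $k(s)=-k_{g}\tan[k_{g}(s-b_{1})]$, proving the forward direction. The converse is then immediate: this choice of $k$ produces a $\nu$ satisfying the harmonic equation, so (5) holds, and the helix property is already built in.

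The only non-routine step is recognizing that the intimidating identity (5) collapses, under the specific relations $\kappa=1/\nu$ and $\tau=k_{g}/\nu$ with $k_{g}$ constant, to a simple harmonic oscillator in $\nu$; after that, the remaining work is a substitution and a single antiderivative.
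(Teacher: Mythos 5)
Your proposal is correct and follows essentially the same route as the paper: compute $\kappa=1/\nu$, $\tau=k_{g}/\nu$, $\nu=b\,e^{\int k\,ds}$ and substitute into the spherical condition (5), then solve the resulting ODE for the forward direction and reverse the computation for the converse. The only differences are cosmetic: you linearize by working with $\nu$ (obtaining $\nu''+k_{g}^{2}\nu=0$) where the paper keeps the equivalent Riccati form $k'+k^{2}=-k_{g}^{2}$ via $k=\nu'/\nu$, and you make explicit the (automatic) cylindrical-helix part of ``spherical helix,'' which the paper leaves implicit.
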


\begin{proof}
For the curve
\begin{equation*}
c\left( s \right) =b\int { { e }^{ \int { k\left( s \right) ds }  } \gamma \left( s \right)  } ds+a
\end{equation*}
If we calculate $\kappa$, $\tau$, and $\nu$ of the curve $c$ by using the equations at (1), we will find 
\begin{equation}
\begin{matrix} \kappa \left( s \right) =\frac { 1 }{ b{ e }^{ \int { k\left( s \right) ds }  } }  \\ \tau \left( s \right) =\frac { { k }_{ g }\left( s \right)  }{ b{ e }^{ \int { k\left( s \right) ds }  } }  \\  \nu \left( s \right) =b{ e }^{ \int { k\left( s \right) ds }  } \end{matrix}.
\end{equation}
Now, by putting these equations in (5), we have
\begin{equation*}
\left( \frac { 1 }{ \nu  } \left[ \frac { 1 }{ \nu \tau  } { \left( \frac { 1 }{ \kappa  }  \right)  }^{ ' } \right] ^{ ' }+\frac { \tau  }{ \kappa  }  \right) \left( s \right) =0
\end{equation*}
\begin{equation*}
\left( \frac { 1 }{ b{ e }^{ \int { kds }  } } \left[ \frac { 1 }{ b{ e }^{ \int { kds }  }\frac { { k }_{ g } }{ b{ e }^{ \int { kds }  } }  } { \left( \frac { 1 }{ \frac { 1 }{ b{ e }^{ \int { kds }  } }  }  \right)  }^{ ' } \right] ^{ ' }+\frac { \frac { { k }_{ g } }{ b{ e }^{ \int { kds }  } }  }{ \frac { 1 }{ b{ e }^{ \int { kds }  } }  }  \right) \left( s \right) =0
\end{equation*}
\begin{equation*}
\left( \frac { 1 }{ b{ e }^{ \int { kds }  } } \left[ \frac { 1 }{ { k }_{ g } } { \left( b{ e }^{ \int { kds }  } \right)  }^{ ' } \right] ^{ ' }+{ k }_{ g } \right) \left( s \right) =0
\end{equation*}
\begin{equation*}
\left( \frac { 1 }{ { k }_{ g }{ e }^{ \int { kds }  } } \left[ { k }^{ ' }{ e }^{ \int { kds }  }+{ k }^{ 2 }{ e }^{ \int { kds }  } \right] +{ k }_{ g } \right) \left( s \right) =0
\end{equation*}
\begin{equation*}
{ k }^{ ' }\left( s \right) +{ k }^{ 2 }\left( s \right) =-{ { k }_{ g } }^{ 2 }.
\end{equation*}
If we solve this differential equation, we will have
\begin{equation*}
k\left( s \right) =-{ k }_{ g }tan\left[ \left( { k }_{ g } \right) \left( s-{ b }_{ 1 } \right)  \right] 
\end{equation*}
Conversely,  if we take $k\left( s \right) =-{ k }_{ g }tan\left[ \left( { k }_{ g } \right) \left( s-{ b }_{ 1 } \right)  \right]$ in (8) then
\begin{equation*}
\int { k\left( s \right) ds=\int { -{ k }_{ g }tan\left[ \left( { k }_{ g } \right) \left( s-{ b }_{ 1 } \right)  \right] ds }  }.
\end{equation*}
 Let $u={ k }_{ g }\left( s-{ b }_{ 1 } \right) ={ k }_{ g }s-{ k }_{ g }{ b }_{ 1 }$ then ${ k }_{ g }ds=du$, by using these equations
\begin{eqnarray*}
&& {\int k\left( s \right) ds } =\int { -\tan { u } du } \\
&& \quad\quad \quad\quad   =\ln { \cos { u }  } +\ln { { b }_{ 2 } } \\
&&  \quad\quad \quad\quad =\ln { \left[ { b }_{ 2 }\cos { \left\{ { k }_{ g }\left( s-{ b }_{ 1 } \right)  \right\}  }  \right]  }
\end{eqnarray*}
we have
\begin{eqnarray*}
&& c\left( s \right) =b\int { { e }^{ \int { k\left( s \right) ds }  } \gamma \left( s \right)  } ds+a\\
&&\quad\quad  =b\int { { e }^{ \int {  -{ k }_{ g }tan\left[ \left( { k }_{ g } \right) \left( s-{ b }_{ 1 } \right)  \right] ds }  } \gamma \left( s \right)  } ds+a\\
&&\quad\quad=b\int { { e }^{ \ln { \left[ { b }_{ 2 }\cos { \left\{ { k }_{ g }\left( s-{ b }_{ 1 } \right)  \right\}  }  \right]  } } \gamma \left( s \right)  } ds+a\\
&&\quad\quad=b\int { { b }_{ 2 }\cos { \left\{ { k }_{ g }\left( s-{ b }_{ 1 } \right)  \right\}  }  \gamma \left( s \right)  } ds+a.
\end{eqnarray*}
where $b_1,b_2\in R$.

Now, we must show that curve $c$ is spherical. If we use (4) to do it, we will have
\begin{eqnarray*}
&&{ r }^{ 2 }=\left( \left( \frac { 1 }{ { \kappa  }^{ 2 } } +{ \left( \frac { 1 }{ \nu \tau  } { \left( \frac { 1 }{ \kappa  }  \right)  }^{ ' } \right)  } \right) ^{ 2 } \right) \left( s \right) \\ 
&&=\left( { b }^{ 2 }{ e }^{ 2\int { kds }  }+\left( \frac { 1 }{ b{ e }^{ \int { kds }  }\frac { { k }_{ g } }{ b{ e }^{ \int { kds }  } }  } { \left( \frac { 1 }{ \frac { 1 }{ b{ e }^{ \int { kds }  } }  }  \right)  }^{ ' } \right) ^{ 2 } \right) \left( s \right) \\ 
&&=\left( { b }^{ 2 }{ e }^{ 2\int { kds }  }+\left( \frac { 1 }{ { k }_{ g } } { \left( b{ e }^{ \int { kds }  } \right)  }^{ ' } \right) ^{ 2 } \right) \left( s \right) \\ 
&&=\left( { b }^{ 2 }{ e }^{ 2\int { kds }  }+\frac { b^{ 2 }k^{ 2 } }{ { k }_{ g }^{ 2 } } { e }^{ 2\int { kds }  } \right) \left( s \right) \\ 
&&=\left( { b }^{ 2 }{ e }^{ 2\int { kds }  }\left( 1+\frac { k^{ 2 } }{ { k }_{ g }^{ 2 } }  \right)  \right) \left( s \right) \\ 
&&={ b }^{ 2 }{ b_{ 2 } }^{ 2 }\cos ^{ 2 }{ \left\{ { k }_{ g }\left( s-b_{ 1 } \right)  \right\}  } \left( 1+\frac { \left( -{ k }_{ g }tan\left[ \left( { k }_{ g } \right) \left( s-{ b }_{ 1 } \right)  \right]  \right) ^{ 2 } }{ { k }_{ g }^{ 2 } }  \right) \\ 
&&={ b }^{ 2 }{ b_{ 2 } }^{ 2 }\cos ^{ 2 }{ \left\{ { k }_{ g }\left( s-b_{ 1 } \right)  \right\}  } \left( \frac { 1 }{ \cos ^{ 2 }{ \left\{ { k }_{ g }\left( s-b_{ 1 } \right)  \right\}  }  }  \right) \\
&& =b^{ 2 }{ b_{ 2 } }^{ 2 }.
\end{eqnarray*}
Therefore, we can say curve $c$ lies on a sphere which has a radius $\left| bb_2 \right| $
\end{proof}

\begin{example}
Let's take $\gamma \left( s \right) =\left\{ \frac { 1 }{ \sqrt { 3 }  } cos\left( \frac { s }{ \left( { 1 }/{ \sqrt { 3 }  } \right)  }  \right) ,\frac { 1 }{ \sqrt { 3 }  } sin\left( \frac { s }{ \left( { 1 }/{ \sqrt { 3 }  } \right)  }  \right) ,\sqrt { \frac { 2 }{ 3 }  }  \right\}$, we know that $\gamma$ is a spacelike curve on $S^2$ with the geodesic curvature $\sqrt { 2 }$. Then due to Theorem 1,  

\begin{equation*}
k\left( s \right) ={ k }_{ g }tan\left[ \left( { k }_{ g } \right) \left( s-{ b}_{ 1 } \right)  \right] 
\end{equation*}
and
\begin{equation*}
\alpha\left( s \right) =b\int { { b }_{ 2 }\cos { \left\{ { k }_{ g }\left( s-{ b }_{ 1 } \right)  \right\}  }  \gamma \left( s \right)  } ds+a
\end{equation*}
where $b,b_{ 1 },b_{ 2 }\in R$. If we take $b=2, b_{ 1 }=0 ,b_{ 2 }=1$ then we have
\begin{equation*}
\begin{matrix} 
{ \alpha  }_{ 1 }\left( s \right) =-2\sqrt { \frac { 2 }{ 3 }  } \cos { \left( \sqrt { 3 } s \right)  } \sin { \left( \sqrt { 2 } s \right)  } +2\cos { \left( \sqrt { 2 } s \right)  } \sin { \left( \sqrt { 3 } s \right)  }  \\ { \alpha  }_{ 2 }\left( s \right) =-\frac { 2 }{ 3 } \left( 3\cos { \left( \sqrt { 2 } s \right)  } \cos { \left( \sqrt { 3 } s \right)  } +\sqrt { 6 } \sin { \left( \sqrt { 2 } s \right)  } \sin { \left( \sqrt { 3 } s \right)  }  \right) \\ { \alpha  }_{ 3 }\left( s \right) =\frac { 2\sin { \left( \sqrt { 2 } s \right)  }  }{ \sqrt { 3 }  } \end{matrix}
\end{equation*}
where $ \alpha\left( s \right) =\left( {  \alpha }_{ 1 }\left( s \right) ,{  \alpha }_{ 2 }\left( s \right) ,{  \alpha }_{ 3 }\left( s \right)  \right) $ and $a=\left( 0,0,0\right) $
\end{example}

\begin{figure}[h!]
  \centering
    \includegraphics[width=0.3\textwidth]{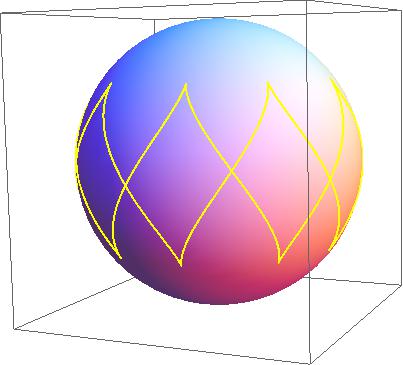}
    \includegraphics[width=0.3\textwidth]{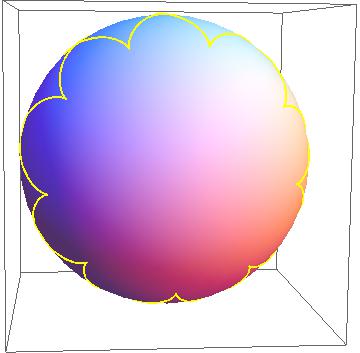}
    \caption{Spherical Helice}
\end{figure}

Now, we can write a new thorem about (7) in which we are looking for the $slant$ $helix$ condition of the curve $c$.

\begin{theorem}
Let $\gamma \left( s \right) $ be a unit speed spherical curve on $S^2$; $b, m, n $ be constant numbers; and a be a constant vector. The $geodesic$ $curvature$ of $\gamma \left( s \right)$ satisfies
\begin{equation*}
{ { { k }_{ g } }^{ 2 } }\left( s \right) =\frac { \left( ms+n \right) ^{ 2 } }{ 1-{ \left( ms+n \right)  }^{ 2 } } 
\end{equation*}
if and only if 
\begin{equation*}
c\left( s \right) =b\int { { e }^{ \int { k\left( s \right) ds }  } \gamma \left( s \right)  } ds+a
\end{equation*}
is a slant helix.
\end{theorem}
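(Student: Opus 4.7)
The plan is to apply the slant-helix criterion (equation (3)) directly to $c$, reusing the expressions for $\kappa$, $\tau$, and $\nu$ that were derived in the proof of Theorem 1. From those formulas, $\kappa = 1/(be^{\int k\,ds})$, $\tau = k_g/(be^{\int k\,ds})$, and $\nu = be^{\int k\,ds}$, so in particular $\tau/\kappa = k_g(s)$ and $(\tau/\kappa)' = k_g'(s)$.

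Next I would substitute these into the slant-helix quantity $\sigma$. Since $\kappa^2+\tau^2 = (1+k_g^2)/(be^{\int k\,ds})^2$, the factor $\nu(\kappa^2+\tau^2)^{3/2}$ matches $\kappa^2$ up to a multiplicative factor of $(1+k_g^2)^{3/2}$. All dependence on $b$, $k$, and the exponential cancels, leaving
$$\sigma(s) = \frac{k_g'(s)}{(1+k_g^2(s))^{3/2}}.$$
Hence $c$ is a slant helix precisely when this expression equals a constant $m$.

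The core step is then a one-variable integration. Since $\frac{d}{ds}\!\bigl(k_g/\sqrt{1+k_g^2}\bigr) = k_g'/(1+k_g^2)^{3/2}$, the condition $\sigma\equiv m$ integrates to $k_g/\sqrt{1+k_g^2} = ms+n$ for an integration constant $n$. Squaring and solving algebraically for $k_g^2$ gives exactly $k_g^2(s) = (ms+n)^2/(1-(ms+n)^2)$, the stated identity. The converse reverses these steps: starting from the formula for $k_g^2$, one extracts $k_g/\sqrt{1+k_g^2} = \pm(ms+n)$ (with the sign absorbed into $m$ and $n$), differentiates once, and verifies $\sigma \equiv m$, concluding that $c$ is a slant helix.

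The main obstacle is purely bookkeeping: correctly collapsing $\kappa^2/(\nu(\kappa^2+\tau^2)^{3/2})$ down to $1/(1+k_g^2)^{3/2}$ requires careful tracking of the powers of $be^{\int k\,ds}$, and any slip in an exponent would produce a wrong ODE and a wrong final identity. Once that simplification is secured, the remaining work is elementary—a separable ODE plus algebraic manipulation—and the same calculation handles both directions of the equivalence.
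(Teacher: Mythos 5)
Your proposal is correct and follows essentially the same route as the paper: both reduce $\sigma$ to $k_g'/(1+k_g^2)^{3/2}$ using $\kappa=1/(be^{\int k\,ds})$, $\tau=k_g/(be^{\int k\,ds})$, $\nu=be^{\int k\,ds}$, and then integrate the resulting separable condition to $k_g/\sqrt{1+k_g^2}=ms+n$ before squaring. Your forward direction is marginally cleaner (you differentiate the antiderivative $k_g/\sqrt{1+k_g^2}$ directly, whereas the paper differentiates $k_g^2=u^2/(1-u^2)$ and substitutes, tracking a sign $\varepsilon=\pm1$ along the way), but the substance is identical.
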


\begin{proof}
Let, for $\gamma$
\begin{equation}
{ { { k }_{ g } }^{ 2 } }\left( s \right) =\frac { \left( ms+n \right) ^{ 2 } }{ 1-{ \left( ms+n \right)  }^{ 2 } }.
\end{equation}
 For the curve $c\left( s \right)$ we have
\begin{equation*}
\begin{matrix} 
{ c }^{ ' }\left( s \right) =b{ e }^{ \int { k\left( s \right) ds }  }\gamma \left( s \right)  \\ 
{ c }^{ '' }\left( s \right) =b{ e }^{ \int { k\left( s \right) ds }  }\left\{ k\left( s \right) \gamma \left( s \right) +{ \gamma  }^{ ' }\left( s \right)  \right\}  \\
{ c }^{ ''' }\left( s \right) =b{ e }^{ \int { k\left( s \right) ds }  }\left\{ \left( { k }^{ 2 }\left( s \right) +{ k }^{ ' }\left( s \right)  \right) \gamma \left( s \right) +2k\left( s \right) { \gamma  }^{ ' }\left( s \right) +{ \gamma  }^{ '' }\left( s \right)  \right\} \\
\kappa\left( s \right) =\frac { 1 }{ b{ e }^{ \int { k\left( s \right)ds }  } }  \\
\tau\left( s \right) =\frac { { k }_{ g }\left( s \right) }{ b{ e }^{ \int { k\left( s \right)ds }  } }\\
\nu\left( s \right)= b{ e }^{ \int { k\left( s \right)ds }  }.
\end{matrix}
\end{equation*}
The geodesic curvature  of the spherical image of the principal normal indicatrix of $c$ is as follows
\begin{eqnarray*}
&&\sigma (s)=\left( \frac { { \kappa  }^{ 2 } }{ { { \nu \left( { \kappa  }^{ 2 }+{ \tau  }^{ 2 } \right)  } }^{ { 3 }/{ 2 } } } { \left( \frac { \tau  }{ \kappa  }  \right)  }^{ ' } \right) \left( s \right)\\
&&\quad\quad=\left( \frac { \frac { 1 }{ { \nu  }^{ 2 } }  }{ { { \nu \left( \frac { 1 }{ { \nu  }^{ 2 } } +\frac { { { k }_{ g } }^{ 2 } }{ { \nu  }^{ 2 } }  \right)  } }^{ { 3 }/{ 2 } } } { { k }_{ g } }^{ ' } \right) \left( s \right).
\end{eqnarray*}
So we have
\begin{equation}
\sigma (s)=\frac { { { k }_{ g } }^{ ' }\left( s \right)  }{ { \left( { { k }_{ g } }^{ 2 }\left( s \right) +1 \right)  }^{ { 3 }/{ 2 } } }
\end{equation}
Now, let's take $u\left( s \right) =ms+n$ then we have (11)
\begin{equation}
{ { { k }_{ g } }^{ 2 } }\left( s \right) =\frac { u^{ 2 }\left( s \right)  }{ 1-u^{ 2 }\left( s \right)  }. 
\end{equation}
If we take the derivates of the both sides of (13) for $s$ we have
\begin{eqnarray*}
&&2{ k }_{ g }{ { \left( s \right) k }_{ g } }^{ ' }\left( s \right) =\left( \frac { 2u{ u }^{ ' }\left( 1-{ u }^{ 2 } \right) -\left( -2u{ u }^{ ' } \right) { u }^{ 2 } }{ { \left( 1-{ u }^{ 2 } \right)  }^{ 2 } }  \right) \left( s \right) \\
&&{ k }_{ g }{ { \left( s \right) k }_{ g } }^{ ' }\left( s \right) =\left( \frac { u{ u }^{ ' } }{ { \left( 1-{ u }^{ 2 } \right)  }^{ 2 } }  \right) \left( s \right) \\
\end{eqnarray*}
\begin{equation}
{ { k }_{ g } }^{ ' }\left( s \right) =\left(\left( \frac { u{ u }^{ ' } }{ { \left( 1-{ u }^{ 2 } \right)  }^{ 2 } }  \right) \left( \varepsilon \sqrt { \frac { 1-{ u }^{ 2 } }{ { u }^{ 2 } }  }  \right)\right) \left( s \right)
\end{equation}
where $\varepsilon=\pm 1$. Putting (13) and  (14) in (12), we have
\begin{eqnarray*}
&&\sigma (s)=\frac { { { k }_{ g } }^{ ' }\left( s \right)  }{ { \left( { { k }_{ g } }^{ 2 }\left( s \right) +1 \right)  }^{ { 3 }/{ 2 } } }\\
&&\quad\quad= \left(\varepsilon \frac { \frac { \sqrt { 1-{ u }^{ 2 } } u{ u }^{ ' } }{ \left| u \right| { \left( 1-{ u }^{ 2 } \right)  }^{ 2 } }  }{ { \left( \frac { { u }^{ 2 } }{ 1-{ u }^{ 2 } } +1 \right)  }^{ { 3 }/{ 2 } } }  \right) \left( s \right)\\
&&\quad\quad=\left( \varepsilon \frac { \sqrt { 1-{ u }^{ 2 } } u{ u }^{ ' } }{ \left| u \right| { \left( 1-{ u }^{ 2 } \right)  }^{ 2 } } { \left( 1-{ u }^{ 2 } \right)  }^{ { 3 }/{ 2 } } \right) \left( s \right) \\
&&\quad\quad=\left( \varepsilon \frac { { \left( 1-{ u }^{ 2 } \right)  }^{ 2 } }{ { \left( 1-{ u }^{ 2 } \right)  }^{ 2 } } \frac { u }{ \left| u \right|  } { u }^{ ' } \right) \left( s \right) \\
&&\quad\quad=\varepsilon \frac { ms+n }{ \left| ms+n \right|  } m\\
&&\quad\quad=\varepsilon m
\end{eqnarray*}
which is constant.

Conversely, let $c\left( s \right)$ be a $slant$ $helix$, then the geodesic curvature  of the spherical image of the principal normal indicatrix of $c$ is a constant function. So we can take
\begin{equation*}
\sigma (s)=\left( \frac { { \kappa  }^{ 2 } }{ { { \nu \left( { \kappa  }^{ 2 }+{ \tau  }^{ 2 } \right)  } }^{ { 3 }/{ 2 } } } { \left( \frac { \tau  }{ \kappa  }  \right)  }^{ ' } \right) \left( s \right)=m
\end{equation*}
where $m \in R$. Therefore, from (12)
\begin{eqnarray*}
&&m=\left( \frac { { \kappa  }^{ 2 } }{ { { \nu \left( { \kappa  }^{ 2 }+{ \tau  }^{ 2 } \right)  } }^{ { 3 }/{ 2 } } } { \left( \frac { \tau  }{ \kappa  }  \right)  }^{ ' } \right) \left( s \right)\\
&&\quad=\frac { { { k }_{ g } }^{ ' }\left( s \right)  }{ { { \left( { { k }_{ g } }^{ 2 }\left( s \right) +1 \right)  } }^{ { 3 }/{ 2 } } }
\end{eqnarray*}
If we solve this differential equation, we have
\begin{equation*}
\frac { { k }_{ g }\left( s \right)  }{ \sqrt { { { k }_{ g } }^{ 2 }\left( s \right) +1 }  } =ms+n
\end{equation*}
where $n \in R$.
Then,
\begin{eqnarray*}
&&\frac { { { k }_{ g } }^{ 2 }\left( s \right)  }{ { { k }_{ g } }^{ 2 }\left( s \right) +1 } ={ \left( ms+n \right)  }^{ 2 }\\
&&\frac { { { k }_{ g } }^{ 2 }\left( s \right) +1-1 }{ { { k }_{ g } }^{ 2 }\left( s \right) +1 } ={ \left( ms+n \right)  }^{ 2 }\\
&&1-\frac { 1 }{ { { k }_{ g } }^{ 2 }\left( s \right) +1 } ={ \left( ms+n \right)  }^{ 2 }\\
&&{ { k }_{ g } }^{ 2 }\left( s \right) =\frac { 1 }{ { 1-\left( ms+n \right)  }^{ 2 } } -1\\
&&{ { { k }_{ g } }^{ 2 } }\left( s \right) =\frac { \left( ms+n \right) ^{ 2 } }{ 1-{ \left( ms+n \right)  }^{ 2 } }.
\end{eqnarray*}
\end{proof}
Furthermore, we can give a similar theorem for (8),

\begin{theorem}
Let $\gamma \left( s \right) $ be a unit speed spherical curve on $S^2$; $b, m, n, \theta $ be constant numbers; and a be a constant vector. The $geodesic$ $curvature$ of $\gamma \left( s \right)$ satisfies
\begin{equation*}
{ { { k }_{ g } }^{ 2 } }\left( s \right) =\frac { \left( ms+n \right) ^{ 2 } }{ 1-{ \left( ms+n \right)  }^{ 2 } } 
\end{equation*}
if and only if 
\begin{equation*}
c\left( s \right) =b\int _{ { s }_{ 0 } }^{ s }{ \gamma \left( \varphi  \right) d } \varphi +b\cot { \theta  } \int _{ { s }_{ 0 } }^{ s }{ p\left( \varphi  \right) d } \varphi +a
\end{equation*}
is a slant helix.
\end{theorem}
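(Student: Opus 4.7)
The plan is to mimic the strategy of Theorem 2: derive the Frenet apparatus of $c$ in terms of $k_g$, evaluate the slant-helix invariant $\sigma$ from (3), and show that its constancy is equivalent to the stated ODE for $k_g$. The pay-off we are aiming for is that $\sigma$ should simplify to a function of $k_g$ alone, independent of $b$ and $\theta$, so that the converse step can recycle, verbatim, the ODE analysis that already closed Theorem 2.

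First I would differentiate $c$ using the Sabban formulas (6). A direct computation gives $c'(s) = b\,\gamma(s) + b\cot\theta\, p(s)$, so $\nu(s) = b/|\sin\theta|$ is \emph{constant}; then $c''(s) = b\bigl(1-k_g(s)\cot\theta\bigr)\, t(s)$; and $c'''$ is a combination of $\gamma,t,p$ that one writes out via $t' = -\gamma + k_g p$. Next, using $\gamma\times t = p$ and $p\times t = -\gamma$, I form $c'\times c''$ as a scalar multiple of $p - \cot\theta\,\gamma$. After fixing signs so that $\kappa>0$, this yields
\[
\kappa(s) = \frac{(1-k_g\cot\theta)\sin^{2}\theta}{b},\qquad \tau(s) = \frac{(k_g+\cot\theta)\sin^{2}\theta}{b}.
\]

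The key algebraic observation is the identity
\[
(1-k_g\cot\theta)^{2} + (k_g+\cot\theta)^{2} \;=\; (1+k_g^{2})\csc^{2}\theta,
\]
from which $\kappa^{2}+\tau^{2} = (1+k_g^{2})\sin^{2}\theta/b^{2}$. A short differentiation also gives $(\tau/\kappa)' = k_g'\csc^{2}\theta/(1-k_g\cot\theta)^{2}$. Substituting everything into the defining expression (3) for $\sigma$, all factors involving $b$ and $\theta$ cancel, and the invariant collapses to
\[
\sigma(s) \;=\; \frac{k_g'(s)}{\bigl(1+k_g^{2}(s)\bigr)^{3/2}},
\]
which is exactly the expression that controlled the slant-helix condition in Theorem 2.

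Therefore $c$ is a slant helix if and only if this quantity is a constant $m$, and integrating the equation $k_g'/(1+k_g^{2})^{3/2} = m$ exactly as in the converse half of Theorem 2 produces $k_g^{2}(s) = (ms+n)^{2}/\bigl(1-(ms+n)^{2}\bigr)$. The main obstacle is purely the vector-calculus bookkeeping needed to land on the formulas for $\kappa$ and $\tau$ above, together with the sign choices; once the cancellation behind $\kappa^{2}+\tau^{2} = (1+k_g^{2})\sin^{2}\theta/b^{2}$ is spotted, the problem reduces to the ODE already solved in Theorem 2 and no new analytic work is required.
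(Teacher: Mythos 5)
Your proposal is correct and follows essentially the same route as the paper: compute $c'$, $c''$, $c'''$ via the Sabban formulas, obtain $\kappa=(1-k_g\cot\theta)\sin^2\theta/b$, $\tau=(k_g+\cot\theta)\sin^2\theta/b$, constant $\nu$, reduce $\sigma$ to $k_g'/(1+k_g^2)^{3/2}$, and then reuse the ODE argument from Theorem 2. (Incidentally, your $c''(s)=b(1-k_g\cot\theta)\,t(s)$ is the correct expression; the paper misprints $p(s)$ there, but its subsequent formulas agree with yours.)
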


\begin{proof}
Let, for $\gamma$
\begin{equation}
{ { { k }_{ g } }^{ 2 } }\left( s \right) =\frac { \left( ms+n \right) ^{ 2 } }{ 1-{ \left( ms+n \right)  }^{ 2 } }.
\end{equation}
 For the curve $c\left( s \right)$ we have
\begin{equation*}
\begin{matrix} 
{ c }^{ ' }\left( s \right) =b\left( \gamma \left( s \right) +\cot { \theta  } p\left( s \right)  \right) \\
{ c }^{ '' }\left( s \right) =b\left( 1-\cot { \theta  } { k }_{ g }\left( s \right)  \right) p\left( s \right) \\ 
{ c }^{ ''' }\left( s \right) =-b\cot { \theta  } { { k }_{ g } }^{ ' }\left( s \right) p\left( s \right) +b\left( 1-\cot { \theta  } { k }_{ g }\left( s \right)  \right) \left( -\gamma \left( s \right) +{ k }_{ g }\left( s \right) p\left( s \right)  \right) \\
\kappa \left( s \right) =\varepsilon \frac { \sin ^{ 2 }{ \theta  } \left( 1-\cot { \theta  } { k }_{ g }\left( s \right)  \right)  }{ b } \\
\tau \left( s \right) =\frac { \sin ^{ 2 }{ \theta  } \left( { k }_{ g }\left( s \right) +\cot { \theta  }  \right)  }{ b } \\ 
\nu \left( s \right) =\varepsilon b\csc { \theta  }.
\end{matrix}
\end{equation*}
 where $\varepsilon=\pm 1$. 

The geodesic curvature  of the spherical image of the principal normal indicatrix of $c$ is as follows
\begin{eqnarray*}
&&\sigma (s)=\left( \frac { { \kappa  }^{ 2 } }{ { { \nu \left( { \kappa  }^{ 2 }+{ \tau  }^{ 2 } \right)  } }^{ { 3 }/{ 2 } } } { \left( \frac { \tau  }{ \kappa  }  \right)  }^{ ' } \right) \left( s \right) \\
&&\quad \quad =\left( \frac { \varepsilon \sin ^{ 3 }{ \theta  } { { k }_{ g } }^{ ' } }{ { { { b }^{ 3 }\left( \frac { \left( { \varepsilon  }^{ 2 }+\cot ^{ 2 }{ \theta  } -2\left( -1+{ \varepsilon  }^{ 2 } \right) \cot { \theta  } { k }_{ g }+\left( 1+{ \varepsilon  }^{ 2 }\cot ^{ 2 }{ \theta  }  \right) { { k }_{ g } }^{ 2 } \right) \sin ^{ 4 }{ \theta  }  }{ { a }^{ 2 } }  \right)  } }^{ { 3 }/{ 2 } } }  \right) \left( s \right) \\ 
&&\quad \quad=\left( \frac { \varepsilon { { k }_{ g } }^{ ' } }{ { { \sin ^{ 3 }{ \theta  } \left( \left( 1+\cot ^{ 2 }{ \theta  }  \right) \left( 1+{ { k }_{ g } }^{ 2 } \right)  \right)  } }^{ { 3 }/{ 2 } } }  \right) \left( s \right) \\ 
&&\quad \quad=\left( \frac { \varepsilon { { k }_{ g } }^{ ' } }{ { { \sin ^{ 3 }{ \theta  } \left( \frac { 1 }{ \sin ^{ 2 }{ \theta  }  } \left( 1+{ { k }_{ g } }^{ 2 } \right)  \right)  } }^{ { 3 }/{ 2 } } }  \right) \left( s \right) \\ 
&&\quad \quad=\left( \frac { \varepsilon { { k }_{ g } }^{ ' } }{ { { \left( 1+{ { k }_{ g } }^{ 2 } \right)  } }^{ { 3 }/{ 2 } } }  \right) \left( s \right) 
\end{eqnarray*}
So we have
\begin{equation}
\sigma (s)=\frac { \varepsilon { { k }_{ g } }^{ ' }\left( s \right)  }{ { \left( { { k }_{ g } }^{ 2 }\left( s \right) +1 \right)  }^{ { 3 }/{ 2 } } }
\end{equation}
Now, let's take $u\left( s \right) =ms+n$ then we have (15)
\begin{equation}
{ { { k }_{ g } }^{ 2 } }\left( s \right) =\frac { u^{ 2 }\left( s \right)  }{ 1-u^{ 2 }\left( s \right)  }. 
\end{equation}
If we take the derivates of the both sides of (17) for $s$ we have
\begin{eqnarray*}
&&2{ k }_{ g }{ { \left( s \right) k }_{ g } }^{ ' }\left( s \right) =\left( \frac { 2u{ u }^{ ' }\left( 1-{ u }^{ 2 } \right) -\left( -2u{ u }^{ ' } \right) { u }^{ 2 } }{ { \left( 1-{ u }^{ 2 } \right)  }^{ 2 } }  \right) \left( s \right) \\
&&{ k }_{ g }{ { \left( s \right) k }_{ g } }^{ ' }\left( s \right) =\left( \frac { u{ u }^{ ' } }{ { \left( 1-{ u }^{ 2 } \right)  }^{ 2 } }  \right) \left( s \right) \\
\end{eqnarray*}
\begin{equation}
{ { k }_{ g } }^{ ' }\left( s \right) =\left(\left( \frac { u{ u }^{ ' } }{ { \left( 1-{ u }^{ 2 } \right)  }^{ 2 } }  \right) \left( \varepsilon \sqrt { \frac { 1-{ u }^{ 2 } }{ { u }^{ 2 } }  }  \right)\right) \left( s \right).
\end{equation}
Putting (17) and  (18) in (16), we have
\begin{eqnarray*}
&&\sigma (s)=\frac {  \varepsilon{ { k }_{ g } }^{ ' }\left( s \right)  }{ { \left( { { k }_{ g } }^{ 2 }\left( s \right) +1 \right)  }^{ { 3 }/{ 2 } } }\\
&&\quad\quad= \left({\varepsilon}^2 \frac { \frac { \sqrt { 1-{ u }^{ 2 } } u{ u }^{ ' } }{ \left| u \right| { \left( 1-{ u }^{ 2 } \right)  }^{ 2 } }  }{ { \left( \frac { { u }^{ 2 } }{ 1-{ u }^{ 2 } } +1 \right)  }^{ { 3 }/{ 2 } } }  \right) \left( s \right)\\
&&\quad\quad=\left( \frac { \sqrt { 1-{ u }^{ 2 } } u{ u }^{ ' } }{ \left| u \right| { \left( 1-{ u }^{ 2 } \right)  }^{ 2 } } { \left( 1-{ u }^{ 2 } \right)  }^{ { 3 }/{ 2 } } \right) \left( s \right) \\
&&\quad\quad=\left(\frac { { \left( 1-{ u }^{ 2 } \right)  }^{ 2 } }{ { \left( 1-{ u }^{ 2 } \right)  }^{ 2 } } \frac { u }{ \left| u \right|  } { u }^{ ' } \right) \left( s \right) \\
&&\quad\quad= \frac { ms+n }{ \left| ms+n \right|  } m\\
&&\quad\quad= \varepsilon m
\end{eqnarray*}
which is constant.

Conversely, let $c\left( s \right)$ be a $slant$ $helix$, then the geodesic curvature  of the spherical image of the principal normal indicatrix of $c$ is a constant function. So we can take
\begin{equation*}
\sigma (s)=\left( \frac { { \kappa  }^{ 2 } }{ { { \nu \left( { \kappa  }^{ 2 }+{ \tau  }^{ 2 } \right)  } }^{ { 3 }/{ 2 } } } { \left( \frac { \tau  }{ \kappa  }  \right)  }^{ ' } \right) \left( s \right)=m
\end{equation*}
where $m \in R$. Therefore, from (16)
\begin{eqnarray*}
&&m=\left( \frac { { \kappa  }^{ 2 } }{ { { \nu \left( { \kappa  }^{ 2 }+{ \tau  }^{ 2 } \right)  } }^{ { 3 }/{ 2 } } } { \left( \frac { \tau  }{ \kappa  }  \right)  }^{ ' } \right) \left( s \right)\\
&&\quad=\frac { \varepsilon{ { k }_{ g } }^{ ' }\left( s \right)  }{ { { \left( { { k }_{ g } }^{ 2 }\left( s \right) +1 \right)  } }^{ { 3 }/{ 2 } } }
\end{eqnarray*}
If we solve this differential equation, we have
\begin{equation*}
\frac { \varepsilon { k }_{ g }\left( s \right)  }{ \sqrt { { { k }_{ g } }^{ 2 }\left( s \right) +1 }  } =ms+n
\end{equation*}
where $n \in R$.
Then,
\begin{eqnarray*}
&&\frac { { { k }_{ g } }^{ 2 }\left( s \right)  }{ { { k }_{ g } }^{ 2 }\left( s \right) +1 } ={ \left( ms+n \right)  }^{ 2 }\\
&&\frac { { { k }_{ g } }^{ 2 }\left( s \right) +1-1 }{ { { k }_{ g } }^{ 2 }\left( s \right) +1 } ={ \left( ms+n \right)  }^{ 2 }\\
&&1-\frac { 1 }{ { { k }_{ g } }^{ 2 }\left( s \right) +1 } ={ \left( ms+n \right)  }^{ 2 }\\
&&{ { k }_{ g } }^{ 2 }\left( s \right) =\frac { 1 }{ { 1-\left( ms+n \right)  }^{ 2 } } -1\\
&&{ { { k }_{ g } }^{ 2 } }\left( s \right) =\frac { \left( ms+n \right) ^{ 2 } }{ 1-{ \left( ms+n \right)  }^{ 2 } } .
\end{eqnarray*}
\end{proof}

\label{sec:resu}


\begin{thebibliography}{00}
\bibitem{paper1} L. Kula, Y. Yayl{\i}, On slant helix and its spherical indicatrix, Appl. Math. Comput., 169 (1), 600-607, 2005.
\bibitem{paper2} S. Izumiya, N. Takeuchi, Generic properties of helices and Bertrand curves , J. Geom. 74, 97-109, 2002.
\bibitem{paper3} R. Encheva, G. Georgiev, Shapes of space curves, Journal for Geometry and Graphics, Vol 7, No. 2, 145-155, 2003.
\bibitem{paper4} S. Izumiya, N. Takeuchi, New special curves and developable surfaces, Turk. J. Math. 28, 153-163, 2004.
\bibitem{paper5} B. O'Neill, Elementary Differential Geometry, Academic Press, 2006.
\bibitem{paper6} D. J. Struik, Lectures on Classical  Differential Geometry, Dover, 1961.


\end{thebibliography}
\end{document}